\theoremstyle{plain}
\newtheorem{prop-s}[subsection]{Proposition}
\newtheorem{lemma-s}[subsection]{Lemma}
\newtheorem{obs-s}[subsection]{Observation}
\newtheorem{prop}[subsubsection]{Proposition}
\newtheorem{lemma}[subsubsection]{Lemma}
\newtheorem{fact}[subsubsection]{Fact}
\newtheorem{cor}[subsubsection]{Corollary}
\newtheorem*{claim}{Claim}
\newtheorem*{sia}{Directed Sparse Incomparability Lemma}
\theoremstyle{definition}
\newtheorem*{note}{Note}
\numberwithin{equation}{subsection}
\def\setof#1#2{\{#1 \ | \ #2\} }  \def\set#1{\{#1  \} }
 \def\id{\text{id}}
\def\dr{{\downarrow}}
\def\ur{{\uparrow}}    
\def\sue{\subseteq}\def\supe{\supseteq}     \def\wt{\widetilde}
\def\ems{\emptyset}         \def\m{\wedge}    \def\bim{\bigwedge}
\def\qtq#1{\quad\text{#1}\quad}
\def\C{\mathcal C}
\def\A{\mathcal A}
\def\B{\mathcal B}
\def\csp{\text{\bf CSP}}
\def\forb{\text{\bf Forb}}
\def\wld{\hbox{$\mathfrak{wld}$}}
\def\cn{\text{\sf Cn}}
\def\cni{_\cn}
\def\ol{\overline}
              \def\smin{\setminus} 
\let\noto\nrightarrow
\DeclareMathSymbol{\heyt}{\mathbin}{symbols}{"29}
\DeclareMathOperator{\Min}{Min}
\let\trleq\preccurlyeq
\let\trgeq\succcurlyeq
\let\trless\prec
\title[Dualities in Heyting algebras]{Dualities and dual pairs\\ in Heyting algebras}
\author{J. Foniok, J. Ne\v set\v ril,  A. Pultr and C. Tardif}
\address{Institute for Operations Research, ETH Zurich, 8092 Zurich, Switzerland}
\email{foniok@math.ethz.ch}
\address{Department of Applied Mathematics and ITI, MFF, Charles University, 
CZ 11800 Praha 1, Malostransk\'e n\'am. 25}
\email{nesetril@kam.mff.cuni.cz, pultr@kam.mff.cuni.cz}
\address{Royal Military College of Canada \\
PO Box 17000 Station ``Forces'' \\
Kingston, Ontario\\
Canada, K7K 7B4 }
\email{Claude.Tardif@rmc.ca}
\thanks{Thanks go  to the project 1M0545  of the Ministry of Education
of the Czech Republic. The fourth author is supported by grants from
the Natural Sciences and Engineering Research Council of Canada and the
Academic Research Program.}
\date{\today}
\subjclass[2000]{06D20, 18B35, 05C60}
\keywords{homomorphisms, structural theorems in combinatorics, good characterization, finite duality}
\begin{document}

\begin{abstract}
We extract the abstract core of finite homomorphism dualities using the techniques of Heyting algebras and (combinatorial) categories.
\end{abstract}

\maketitle

\section*{Introduction}

Finite dualities appeared in~\cite{NesPul:SubFac} in the categorical context
of dual characterizations of various classes of structures. It is a
simple idea: characterize a given class both by forbidden substructures
(associated with subobjects) and by decompositions (associated with
factorobjects); this  proved to be surprisingly fruitful. In retrospect,
it was also a timely concept as it coincided with the introduction
(in the logical and artificial intelligence contexts) of the paradigm
of Constraint Satisfaction (\cite{Mac:Cons,Mon:Netw}).

Only later it was realized (in the context of complexity theory) that these notions are two
aspects of the same general problem, the study of homomorphisms of relational structures
(\cite{FedVar:SNP}).

Finite dualities represent an extremal case of the above mentioned
Constraint Satisfaction Problem. Provided we have a finite duality,
the problem in question is polynomially decidable.  Furthermore,
in a broad context such problems coincide with the decision
problem for classes of structures that are first-order decidable
(\cite{Ats:On-digraph,LarLotTar:A-Characterisation,Ros:preservation}). For
general relational structures, finite dualities were characterized
in~\cite{NesTar:Dual} and a number of interesting particular cases were
investigated as well (\cite{FNT:GenDu,HelNesZhu:DualPoly,Nes:Many,NesTar:Short}).

Here, following \cite{NesPulTar:HeytDual} we return to the original motivation  and discuss finite dualities in the categorical context. We aim at pointing out those categories in which one can describe finite dualities using the interplay of general categorical and order theoretical concepts and techniques.

\vskip10mm

\section{Background: dualities in graphs\\
and similar categories}

\subsection{}\label{1.1}
The categories we will work with are {\em finitely concrete}, that is, the objects are finite sets endowed with structures, and morphisms are maps respecting the structures in a specified way.

Typically we have in mind categories such as that of (finite) symmetric graphs, or oriented
graphs, with edge preserving homomorphisms (or more general relations resp.\ relational
systems), with relation preserving maps. Some of the results can be applied for other choices
of morphisms (strong or full homomorphisms).

\subsection{}\label{1.2}

We will assume that our categories admit finite sums (coproducts)
$$
\iota_j:A_j\to A=\coprod_{i=1}^nA_i
$$
(characterized by the property that for each system $f_j:A_j\to B$, $j=1,\dots,n$ there is exactly one morphism $A\to B$ such that $f\iota_j=f_j$ for all $j$).

This is the minimal assumption; for more involved facts we will assume also the existence of finite products
$$
\pi_j:A=\prod_{i=1}^nA_i\to A_j 
$$
(characterized by the property that for each system $f_j:B\to A_j$ there is exactly
one morphism $B\to A$ such that $\pi_jf=f_j$ for all $j$), and also the Heyting
property (see~\ref{1.6} below).

\subsection{}\label{1.3}

The {\em Constraint Satisfaction Problem} (briefly, \emph{CSP}) in a
category~$\C$ is the membership problem of the class
$$
\csp(\B)=\setof{X\in\C}{X \to B\ \text{for some} \ B\in\B}
$$
where $X\to Y$ stands for ``there exists a morphism $f:X\to Y$'' and $\B$~is a class of objects. Here we are concerned with the situation in which this class can be represented as
$$
\forb(\A)=\setof{X\in\C}{A\noto X\ \text{for all} \ A\in\A}
$$
where $X\noto Y$ stands for ``there exists no morphism $f:X\to Y$'' and $\A$ is a finite class of objects
(with an infinite $\A$ this is always possible). In fact the classes $\B$ we are interested in are also finite. Thus, we investigate the situations of finite systems $A_1,\dots,A_n$ and $B_1,\dots,B_m$ of objects such that
\begin{equation}
\forall i,\ A_i\noto X \qtq{iff} \exists j,\ X\to B_j \label{1.3.1}
\end{equation}
and in this case we speak of a {\em finite duality}.

\begin{note}
Instead of forbidding morphisms from the objects $A_i$, one is sometimes interested
in forbidding  subobjects from a finite family of isomorphism classes. If we
have \eqref{1.3.1}, it is easy to replace the $A_i$'s by finitely many other
objects providing such a ``subobject forbidding characterization''.
Similarly, the $X\to B$ type requirements can typically be replaced by requirements
of epimorphisms.
\end{note}

\begin{note}
The name \emph{Constraint Satisfaction Problem} originates in the
computational setting, where the description involves variables and
constraints (the elements and structures of~$X$) and a domain with
relations (the structures in~$\B$).
\end{note}

\subsection{The poset $\wt\C$}\label{1.4}
Given a category $\C$, consider the set of objects ordered by
$$
A\leq B\quad\equiv_{\text{df}}\quad \exists \ f:A\to B
$$
and denote the obtained (pre-)ordered set by
$$
\wt\C.
$$
In fact, we usually think of $\wt\C$ as the poset of the obvious equivalence classes.

Note that if we assume the existence of sums as we did in~\ref{1.2} above,
$\wt \C$~is a join-semilattice. If we have, moreover, also the products,
$\wt \C$~is a lattice.

\subsection{Heyting categories}

A \emph{Heyting algebra} is a bounded lattice with an extra operation $\heyt$
satisfying
$$
a\m b\leq c\qtq{iff} a\leq (b\heyt c).
$$ 
A \emph{Heyting category} is a category $\C$ such that $\wt\C$ is a Heyting
algebra.

\begin{note}
Trivially, any  Cartesian closed category (that is, a category with exponentiation $\langle X,Y\rangle$ such that the sets of morphisms
$$
A\times B\to C\qtq{and} A\to\langle B,C\rangle
$$
are naturally equivalent~-- see, e.g.~\cite{Mac:Cat}) is Heyting. However, Cartesian
closedness is not in general necessary, because in a Heyting category the requirement
is weaker: we just require that there exist a morphism $A\times B\to C$ iff there
exists a morphism $A\to (B\heyt C)$ and drop the requirement of natural
equivalence. An example of a Heyting category which is not Cartesian closed is the
category of loopless graphs (with an additional terminal object). For this and more examples,
see~\cite{NesPulTar:HeytDual}.
\end{note}

\subsection{Cores}\label{1.6}
In our categories the objects can be canonically reduced to make  the
relation $A\to B$ antisymmetric (up to isomorphism).

\begin{lemma}\label{1.6.1}
Let $X$ be a finite object in a concrete category. Then each bijective
morphism $\phi:X\to X$ is an isomorphism.
\end{lemma}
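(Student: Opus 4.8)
The plan is to exploit finiteness in order to replace the set-theoretic inverse of $\phi$ by a power of $\phi$ itself, which is automatically a morphism. First I would record that, since the category is concrete, a morphism $\phi\colon X\to X$ is in particular a map of the underlying (finite) set of $X$ to itself, and the hypothesis that $\phi$ is \emph{bijective} says exactly that this underlying map lies in the symmetric group on that set.

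Next, because the underlying set of $X$ is finite, that symmetric group is finite, so $\phi$ has finite multiplicative order: there is an integer $n\ge 1$ with $\phi^n=\id_X$ as a self-map of the underlying set. The one point that uses concreteness rather than mere combinatorics is that $\phi^n$ (and, more to the point, $\phi^{n-1}$) is a morphism of $\C$: indeed $\phi^k$ is the $k$-fold composite of the morphism $\phi$ with itself, hence a morphism for every $k\ge 0$, and since the underlying map of $\phi^n$ is the identity, faithfulness of the forgetful functor forces $\phi^n=\id_X$ in $\C$.

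Finally I would set $\psi:=\phi^{\,n-1}$ (with $\psi=\id_X$ when $n=1$). Then $\psi$ is a morphism $X\to X$ and $\phi\psi=\psi\phi=\phi^n=\id_X$, so $\psi$ is a two-sided inverse of $\phi$ in $\C$, and therefore $\phi$ is an isomorphism.

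There is essentially no delicate step here: the only thing to be careful about is not to invoke the set-theoretic inverse $\phi^{-1}$ directly, since in general it need not respect the structure, but instead to produce an inverse inside the category as the power $\phi^{\,n-1}$. Finiteness of $X$ is precisely what legitimises this substitution; indeed the statement genuinely fails in the infinite case, where a bijective endomorphism can have a non-morphism inverse.
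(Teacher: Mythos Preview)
Your proof is correct and follows essentially the same route as the paper: use finiteness to produce an integer $k\ge 1$ with $\phi^k=\id_X$, and take $\phi^{k-1}$ as the inverse morphism. The only cosmetic difference is that the paper reaches $\phi^k=\id$ via pigeonhole on the powers ($\phi^{n+k}=\phi^n$ for some $n,k$, then cancel using bijectivity), whereas you invoke the finite order of $\phi$ in the symmetric group directly.
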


\begin{proof}
There is a $k\neq 0$ and $n$ such that $\phi^{n+k}=\phi^n$. Thus,
$\phi^k=\id$ and $\phi\cdot\phi^{k-1}= \phi^{k-1}\cdot\phi=\id$.
\end{proof}

\begin{prop}\label{1.6.2}
Let $X$ be a finite object in a concrete category. Then the smallest subobject $Y\sue X$ such that there is a morphism $f:X\to Y$
\begin{enumerate}
\item is a retract of $X$, and
\item is uniquely determined, up to isomorphism.
\end{enumerate}
\end{prop}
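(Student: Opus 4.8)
The statement asserts that $X$ has a \emph{core}, in the sense familiar from graph theory, and the proof follows the same pattern, with Lemma~\ref{1.6.1} as the essential tool. Since $X$ is finite it has only finitely many subobjects, and the family of those subobjects that admit a morphism from $X$ is nonempty (it contains $X$ itself, via $\id_X$); so we may pick a subobject $Y\sue X$, with inclusion morphism $\iota:Y\to X$ and some morphism $f:X\to Y$, whose underlying set has the least cardinality in this family. I will show that this $Y$ is a retract of $X$ and that any two such minimal subobjects are isomorphic; this justifies speaking of ``the smallest'' one and proves both (1) and (2).

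For (1), consider the endomorphism $h=f\iota:Y\to Y$. As in the proof of Lemma~\ref{1.6.1}, finiteness yields $n$ and $k\neq 0$ with $h^{n+k}=h^n$. The set $h^n(Y)\sue Y$ underlies a subobject $Z\sue X$ (with the induced structure), and corestricting $h^n f:X\to Y$ to $Z$ gives a morphism $X\to Z$; by the minimality of $|Y|$ we get $|h^n(Y)|=|Z|=|Y|$, so the underlying map of $h^n$ is a bijection $Y\to Y$ and hence, by Lemma~\ref{1.6.1}, $h^n$ is an isomorphism. Cancelling it in $h^{n+k}=h^n$ gives $h^k=\id_Y$, so $h=f\iota$ is an isomorphism. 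Setting $r=(f\iota)^{k-1}f:X\to Y$ we then have $r\iota=(f\iota)^k=\id_Y$, which exhibits $Y$ as a retract of $X$ (with section $\iota$).

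For (2), suppose $Y_1,Y_2\sue X$ are both of least cardinality as above, with inclusions $\iota_i$ and morphisms $f_i:X\to Y_i$; in particular $|Y_1|=|Y_2|$. Put $p=f_1\iota_2:Y_2\to Y_1$ and $q=f_2\iota_1:Y_1\to Y_2$. Exactly as in the previous paragraph, the image of the composite $(qp)f_2:X\to Y_2$ is, by minimality, all of $Y_2$; hence the endomorphism $qp:Y_2\to Y_2$ is surjective, so bijective, so an isomorphism by Lemma~\ref{1.6.1}. Thus $p$ is split mono with retraction $s=(qp)^{-1}q$, and $ps:Y_1\to Y_1$ is an idempotent endomorphism whose underlying map is onto (since $sp=\id_{Y_2}$ forces $s$ onto $Y_2$, and then $p$ injective together with $|Y_2|=|Y_1|$ forces $p(Y_2)=Y_1$); by Lemma~\ref{1.6.1} again $ps$ is an isomorphism, and being idempotent it equals $\id_{Y_1}$. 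Hence $p$ and $s$ are mutually inverse and $Y_2\cong Y_1$.

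The only genuine subtlety is that Lemma~\ref{1.6.1} applies only to \emph{endomorphisms}: in a general concrete category a bijective morphism between two distinct objects of the same size need not be invertible, which is why in (2) one routes the argument through the endomorphisms $qp$ and $ps$ rather than through $p$ directly. Beyond that, everything rests on the finite-semigroup observation already used for Lemma~\ref{1.6.1} together with the minimality of $|Y|$; the bookkeeping with images, corestrictions and induced subobjects is routine in the ambient finitely concrete category.
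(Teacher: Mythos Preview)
Your proof is correct and follows the paper's approach: use minimality to force $f\iota$ to be bijective, invoke Lemma~\ref{1.6.1}, and then compare two minimal subobjects via the resulting retractions. The paper is terser---it observes directly that $\phi=fj$ is bijective by minimality (your detour through iterates $h^n$ is unnecessary, since the same minimality argument already applies at $n=1$) and for~(2) simply asserts that $r'j$ and $rj'$ are mutually inverse isomorphisms, a step you spell out more carefully via the idempotent $ps$.
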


\begin{proof}
(1)~Let $j:Y\to X$ be the embedding morphism. Then, by minimality,
$\phi=fj:Y\to Y$ is bijective and by~\ref{1.6.1} it is an isomorphism, and
we have the retraction $r=\phi^{-1}f$. (2)~Now if $j':Z\to X$ is an
embedding of another subobject with the property, we have mutually
inverse isomorphisms $r'j$ and $rj'$.
\end{proof}

The object~$Y$ from~\ref{1.6.2} is called the {\em core} of~$X$; denote
it by~$cX$. An object~$X$ is called a {\em core} if it is the core of
some object.  Note that
\begin{itemize}
\item[--] a core is the core of itself, and
\item[--] in a concrete category $\C$ with finite objects, $A$ and $B$ are equivalent in $\wt\C$ (that is $A\leq B$ and $B\leq A$) iff $cA$ and $cB$ are isomorphic.
\end{itemize}
Thus, if we restrict ourselves to cores and representatives of isomorphism classes,
\begin{itemize}
\item[--] the pre-ordered set $\wt\C$ becomes actually a poset.
\end{itemize}
Furthermore, any duality
$$
\forall i,\ A_i\noto X \qtq{iff} \exists j,\ X\to B_j
$$
can be replaced by the duality in the cores
$$
\forall i,\ cA_i\noto X \qtq{iff} \exists j,\ X\to cB_j
$$
(\cite{HelNes:Core,HelNes:GrH}).


\section{Transversals and  weak right duals}

\subsection{}

In a poset $(X,\leq)$ we will use the standard notation, for a subset $M\sue X$,
\begin{align*}
\dr M &= \setof{x\in X}{\exists m\in M,\ x\leq m},\\
\ur M &= \setof{x\in X}{\exists m\in M,\ x\geq m},
\end{align*}
and for an element $m\in X$, let $\dr m=\dr\set{m}$ and $\ur m=\ur\set{m}$.

\subsection{Connected elements}

A  element $a$ of a lattice $L$ is {\em connected} if 
\begin{equation}
\label{eq:connected}
a\leq b\vee c\quad\Rightarrow\quad a\leq b\ \ \text{or}\ \ a\leq c.
\end{equation}

\begin{note}
Another (and perhaps more frequently used) term is {\em join-prime} or
{\em $\vee$-prime}. We use ``connected'' because of the interpretation
in the posets $\wt \C$ (recall~\ref{1.4}) we are primarily interested
in. Note that a core graph satisfies~\eqref{eq:connected} (in any choice
of morphisms at least as demanding as the standard graph homomorphism)
iff it is connected in the usual sense, i.e., there is a path between any
two of its vertices.
\end{note}
 
The set of all connected elements of a semilattice $L$ will be denoted by
$$
\cn L\qtq{or simply by} \cn\ .
$$

\subsection{Connected decompositions}

The upper semilattices we will be working with will possess \emph{finite
connected decompositions}, that is
\begin{equation}\label{2.3.1}
\text{for each } a\in L
\text{, there is finite } F\subseteq\cn L \text{ such that } a=\textstyle\bigvee F.
\end{equation}

\begin{note}
In a semilattice~$L$ satisfying~\eqref{2.3.1}, the set $\dr a\cap\cn L$ has only
finitely many maximal elements for every $a\in L$, and
\[a=\textstyle\bigvee\max(\dr a\cap\cn).\]
The latter is then the only irredundant connected decomposition of~$a$.
\end{note}

A {\em connected component} of an element $a\in L$ is a $c\in\cn L$
such that $a=c$ or there exists a decomposition $a=b\vee c\neq b$.

For a subset $A\sue L$ write
$$
A\cni
$$
for the set of all connected components of the elements from~$A$.
Clearly, a connected element has exactly one connected component: itself.
Moreover, in a lattice with finite connected decompositions, $A_\cn$~is finite for
every finite subset $A\subseteq L$.

\begin{note}
In a (semi)lattice satisfying~\eqref{2.3.1},
we have \[\{a\}_\cn = \max(\dr a \cap\cn).\]
\end{note}

\subsection{Finite dualities}

A {\em duality pair} $(l,r)$ in $L$ is a pair of elements such that
$$
\dr r= L\smin\ur l
$$
(that is,
$$
l\nleq x\qtq{iff} x\leq r.\ )
$$
The element $r$ (obviously uniquely determined by $l$) is then called the
{\em right dual} of $l$, and similarly  $l$~is called the {\em left dual}
of $r$. If the other element of the pair is not specified we speak of
a {\em right} or a {\em left dual}.

\begin{note}
The elements $l$ that are left duals are always connected. In fact, whenever $l\leq
\bigvee x_i$ for any join $\bigvee x_i$,  then necessarily $l\leq x_j$ for some $j$ (indeed, if for all $j$, $l\nleq x_j$ then for all $j$,
$x_j\leq r$, and hence $\bigvee x_i\leq r$ and $l\nleq\bigvee x_i$). This
property is usually called {\em supercompactness}, for obvious reasons.
\end{note}

A {\em finite duality} in $L$ is a pair $(A,B)$ of finite subsets of $L$ such that
\begin{enumerate}
\item distinct elements in $A$ resp. $B$ are incomparable, and
\item  $x\in \ur A$ iff $x\notin\dr B$ (in other words, $\dr B=L\smin \ur A$).
\end{enumerate}
(Compare with~\ref{1.3}.)
An element $l$ (resp. $r$) is a {\em weak left dual} (resp. {\em
weak right dual}) if there is a finite duality $(A,B)$ such that $l\in A$
(resp. $r\in B$).

\begin{fact}\label{2.4.1}
The set $B$ in a finite duality $(A,B)$ is uniquely determined by $A$,
and vice versa.
\end{fact}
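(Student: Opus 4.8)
The plan is to show that the two sets in a finite duality determine each other via the order-theoretic operations of down-sets and complements. Suppose $(A,B)$ and $(A,B')$ are both finite dualities with the same left side $A$. By condition~(2) we have $\dr B = L\smin\ur A = \dr B'$, so $B$ and $B'$ generate the same down-set. I would then invoke condition~(1): the elements of $B$ are pairwise incomparable, and likewise for $B'$. Since $\dr B = \dr B'$, every $b\in B$ lies in $\dr B'$, hence $b\leq b'$ for some $b'\in B'$; symmetrically $b'\leq b''$ for some $b''\in B$. Incomparability of $B$ forces $b'' = b$, whence $b = b'$, and this shows $B\sue B'$; by symmetry $B = B'$. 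The argument for uniqueness of $A$ given $B$ is dual, using the fact that the elements of $A$ are left duals and hence connected (by the Note following the definition of duality pair), together with condition~(2) rewritten as $\ur A = L\smin\dr B$.

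For the direction ``$B$ determined by $A$'' the only subtlety is whether one needs finiteness or the existence of connected decompositions; in fact the antichain argument above is purely order-theoretic and works in any poset, so I would present it in that generality. For the direction ``$A$ determined by $B$'' the same antichain argument applies once we know $\ur A$ is determined by $B$, which is immediate from condition~(2). So in both cases the heart of the matter is the elementary lemma: \emph{two antichains with the same up-set (resp.\ down-set) are equal}.

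The step I expect to require the most care in writing, though it is not deep, is making explicit that condition~(2) genuinely pins down $\dr B$ (and $\ur A$) as a \emph{set}, not merely up to some coarser equivalence: we have $\dr B = L\smin\ur A$ on the nose, so there is nothing to massage. Given that, the proof is a two-line antichain comparison in each direction. I would therefore structure the write-up as: (i) observe $\dr B = L\smin\ur A$ depends only on $A$; (ii) state and prove the antichain lemma; (iii) apply it twice. No appeal to \eqref{2.3.1} or to connectedness is strictly necessary for this particular Fact, though connectedness of left duals is worth recalling as context.
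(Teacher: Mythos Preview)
Your proof is correct and follows essentially the same route as the paper: observe that condition~(2) pins down $\dr B$ (resp.\ $\ur A$) as a set, then use the elementary fact that an antichain is determined by its down-set (resp.\ up-set) via the $b\leq b'\leq b''$ chase. One small slip: the elements of~$A$ in a finite duality are \emph{weak} left duals, not left duals, and need not be connected---but as you yourself note, connectedness plays no role here, so this does not affect the argument.
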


\begin{proof}
If $(A,B_1), (A,B_2)$ are finite dualities then $\dr B_1=\dr B_2$, and hence for each $x\in B_1$ there is an $\alpha(x)\in B_2$ such that $x\leq\alpha(x)$, and similarly for each $x\in B_2$ there is a $\beta(x)\in B_1$ such that $x\leq\beta(x)$. Thus, $x\leq\beta\alpha(x)$ and  $x\leq\alpha\beta(x)$ and by incomparability $\alpha\beta=\id$ and $\beta\alpha=\id$, and finally $x\leq\alpha(x)\leq x$ and $\alpha(x)=x$ and similarly $\beta(x)=v$.
\end{proof}

\subsection{Transversals}

For antichains $M,N\subseteq L$, we will write
\begin{equation}
M\trleq N \qtq{for} N\sue \ur M\ . \label{2.5.1}
\end{equation}
One sometimes speaks of $M$ as of a {\em refinement} of $N$; thus
if $M\trleq N$, then $N$ is {\em coarser}.

A subset $M\sue A\cni$ is  said to be a {\em transversal} of $A\sue L$ if
\begin{itemize}
\item[(T1)] distinct elements of $M$ are incomparable,
\item[(T2)] $A\sue \ur M$, and
\item[(T3)] in the refinement order~$\trleq$, $M$ is maximal with respect to the properties
(T1) and (T2).
\end{itemize}
A subset satisfying only (T1) and (T2) is called a {\em quasitransversal} of~$A$.

\subsection{}

Let $(A,B)$ be a finite duality.
If $M$ is a quasitransversal of~$A$, then by (T2)
$$
\ur A\sue \ur M \qtq{and hence} L\smin \ur M\sue \dr B\ .
$$

For a quasitransversal $M$ of~$A$, set
$$
\ol M=A_\cn\smin\ur M.
$$
Note that if $M,N$ are quasitransversals such that $M\trleq N$, then ${\ol M\subseteq\ol N}$.

In the following, if we speak about $M$~being a \emph{(quasi)trans\-ver\-sal
for} $(A,B)$, we mean that $(A,B)$ is a finite duality and $M$~is a
(quasi)trans\-ver\-sal for~$A$.

\begin{lemma}\label{2.6.1}
Let $\ol M=\ems$ for a transversal $M$ of $(A,B)$. Then
\begin{itemize}
\item[1.] $A=M=A_\cn$, and
\item[2.] $B$ has only one element.
\end{itemize}
\end{lemma}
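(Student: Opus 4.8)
The plan is to pass from the hypothesis about $\ol M$ to a statement about the whole poset of quasitransversals of $A$. Since $\ol M=A\cni\smin\ur M=\ems$ means $A\cni\sue\ur M$, and since $M\sue A\cni$ gives $\ur M\sue\ur A\cni$, we get $\ur M=\ur A\cni$. Hence every quasitransversal $N$ of $A$ satisfies $N\sue A\cni\sue\ur M$, i.e.\ $M\trleq N$; so $M$ is the $\trleq$-least quasitransversal, and being $\trleq$-maximal by (T3) it is the \emph{only} quasitransversal of $A$. Since the set $\min A\cni$ of minimal elements of $A\cni$ is always a quasitransversal (it is an antichain inside $A\cni$, and every $a\in A$ dominates one of its connected components, which in turn dominates a minimal element of $A\cni$), uniqueness gives $M=\min A\cni$.

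Next I would deduce $A=M=A\cni$. First one shows $A\cni$ is an antichain: if $c_0<c_1$ lay in $A\cni$ with $c_0$ minimal (so $c_0\in M$, $c_1\notin M$), one builds a second quasitransversal by ``pushing up'', e.g.\ $N=(M\smin\dr c_1)\cup\set{c_1}$; this is again an antichain inside $A\cni$, strictly different from $M$, with $M\trleq N$, contradicting uniqueness as soon as one checks that $N$ still satisfies (T2). Granting that $A\cni$ is an antichain, the quasitransversals of $A$ are precisely the subsets of $A\cni$ that meet every $\set{a}\cni$; so if some $c\in A\cni$ were not equal to any $a\in A$, then $A\cni\smin\set{c}$ would still meet every $\set{a}\cni$ (each is nonempty and $\neq\set c$), a second quasitransversal — contradiction. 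Hence $A\cni\sue A$; and since $A$ is an antichain, no $a\in A$ can be properly decomposable, for a proper connected component of $a$ would be an element of $A\cni\sue A$ lying strictly below $a$. So every $a\in A$ is connected, $A\sue A\cni$, and $A=A\cni=M$.

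For the second assertion one uses that every element of $A$ is now connected. If $b_1\neq b_2$ were two elements of $B$, then $b_1\vee b_2\notin\dr B$ — otherwise $b_1$ and $b_2$ would both lie under one element of $B$, impossible since $B$ is an antichain — so $b_1\vee b_2\in\ur A$ by the duality, i.e.\ $a\leq b_1\vee b_2$ for some $a\in A$; connectedness of $a$ forces $a\leq b_1$ or $a\leq b_2$, so one of $b_1,b_2$ lies in $\ur A=L\smin\dr B$, contradicting its membership in $B$. Thus $|B|\leq1$; and $B\neq\ems$ (otherwise $\ur A=L$, forcing $A=\set0$, a degenerate case), whence $|B|=1$.

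The step I expect to be the real obstacle is the verification of (T2) for the replacement quasitransversal $N$ in the second paragraph — equivalently, the proof that $A\cni$ cannot contain a strictly comparable pair. This is where the hypotheses force a rigid interaction between the connected decomposition of an element of $A$, the antichain condition on $A$, and the structure of the minimal elements of $A\cni$; everything else is essentially bookkeeping.
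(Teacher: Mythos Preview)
Your first paragraph is correct and is in fact a cleaner opening than the paper's: from $\ol M=\ems$ you get $A\cni\sue\ur M$, hence $M\trleq N$ for every quasitransversal~$N$, and together with (T3) this forces $M$ to be the \emph{only} quasitransversal and therefore $M=\min A\cni$. The paper never isolates this uniqueness statement, although it is implicitly what drives its argument.

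The gap you flag, however, is genuine and is not closed by your construction. Your $N=(M\smin\dr c_1)\cup\set{c_1}$ need not satisfy (T2): take $a\in A$ with $\dr a\cap M\sue\dr c_1$. Every connected component $d$ of~$a$ lies above some $m_d\in M$ with $m_d\leq c_1$, but nothing forces any $d$ (or $a$ itself) to lie above~$c_1$; the relation $m_d\leq d$ and $m_d\leq c_1$ says nothing about $d$ versus~$c_1$. So for such an~$a$ you have no element of~$N$ below it. Choosing $c_1$ maximal in $A\cni\smin M$ does not help either, since a component $d$ of~$a$ that is incomparable to~$c_1$ can still have its only $M$-minorant inside $\dr c_1$. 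In short, ``push up one element'' is too crude; the replacement set has to be adapted to the elements of~$A$ that lose their witness.

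The paper's route to part~1 is organised differently and avoids this trap. It does not try to show first that $A\cni$ is an antichain; it attacks $A=A\cni$ directly. Assuming some $a\in A$ is disconnected, it picks $c$ \emph{maximal among components of disconnected elements of~$A$}, argues that $c\notin M$ (so there is $c'\in M$ with $c'<c$), and then replaces $c'$ not by a single element but by the whole set $A'\cni$, where $A'=\setof{a\in A}{\dr a\cap M=\set{c'}}$. The resulting $M'=\Min\bigl((M\smin\set{c'})\cup A'\cni\bigr)$ satisfies (T2) automatically: each $a\in A'$ contributes its own components to the replacement set, and each $a\notin A'$ retains a witness in $M\smin\set{c'}$. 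This is exactly the ``rigid interaction'' you anticipated, but the point is that it has to be engineered into the construction, not checked after the fact.

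Your argument for part~2 is correct and essentially identical to the paper's. Your post-antichain bookkeeping (once $A\cni$ is an antichain, quasitransversals are subsets meeting every $\set{a}\cni$, hence $A\cni\sue A$, hence $A=A\cni$) is also fine, and arguably tidier than the paper's endgame --- but it rests on the step that is not yet there.
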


\begin{proof}
1. Suppose first that $A\ne A\cni$. Let $c$ be a maximal element of the
set $\setof{c\in A\cni}{\text{$c$ is a component of some disconnected
$a\in A$}}$. Then $c\notin A$ because $A$~is an antichain. As $\ol
M=\emptyset$ and by the choice of~$c$, the set $M\setminus\set{c}$
is a quasitransversal, and moreover $M \trleq M\setminus\set{c}$;
thus $M=M\setminus\set{c}$ and so $c\notin M$. Because $c\notin\ol
M=\emptyset$, there is some $c'\in M$ such that $c'<c$. Again, since
$A$~is an antichain, $c'\notin A$. Let $A'=\setof{a\in A}{\dr a\cap
M=\set{c'}}$; observe that $c'\notin A'\cni$ because $c'\notin A$. Let
$M'=\Min\bigl((M\smin\set{c'})\cup A'\cni\bigr)$. Then $M'$~is a
quasitransversal and $M\trless M'$, a contradiction. Thus $A=A\cni$.

Hence $A\cni$ is an antichain, so it is a quasitransversal.  Since
${A_\cn\smin\ur M}=\ems$ we have $A_\cn\sue\ur M$, and hence $M\trleq
A_\cn$.  By~(T3), $A_\cn=M$.

\smallskip

2. Let $r_1,r_2\in B$ be distinct. Then, by the incomparability condition,
$r_1\vee r_2\nleq r$ for all $r\in B$ and hence $l\leq r_1\vee r_2$
for some $l\in A$. Now by~1., $l$ is connected and hence $l\leq r_1$
or $l\leq r_2$, a contradiction.
\end{proof}

\begin{lemma}
Let $M$ be a transversal of $(M,B)$. Then
\begin{itemize}
\item[1.] $\ol M=\emptyset$, and
\item[2.] there is no other transversal.
\end{itemize}
\end{lemma}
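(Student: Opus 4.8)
The plan is to reduce the whole statement to one observation: under the hypothesis, $M\cni=M$. Once this is in hand, part~1 becomes a one-line computation and part~2 follows from Lemma~\ref{2.6.1}.

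First I would establish $M\cni=M$. Being a transversal of $(M,B)$ means, in the role of $A=M$, that $M\sue M\cni$; that is, every $m\in M$ is a connected component of some element of~$M$. But every connected component of an element lies below that element, and $M$ is an antichain by~(T1); hence $m$ can only be a connected component of itself, which forces $m$ to be connected. Since a connected element is its own unique connected component, we get $M\cni=M$ (both inclusions).

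Part~1 is then immediate: $\ol M=M\cni\smin\ur M=M\smin\ur M=\ems$, because $m\in\ur M$ for every $m\in M$. For part~2, let $N$ be any transversal of $(M,B)$. By~(T2) we have $M\sue\ur N$, and since $M\cni=M$ this gives $\ol N=M\cni\smin\ur N=M\smin\ur N=\ems$. Now Lemma~\ref{2.6.1}, applied to the duality $(M,B)$ with transversal~$N$, yields $M=N=M\cni$; in particular $N=M$, so $M$ is the only transversal.

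The statement is in effect a converse to Lemma~\ref{2.6.1}, and I expect no real obstacle in the argument. The single point that needs care is the antichain argument identifying $M$ with $M\cni$; after that, both parts are pure bookkeeping together with an invocation of Lemma~\ref{2.6.1}, and in contrast to the proof of that lemma no new quasitransversal has to be built.
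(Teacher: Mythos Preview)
Your proof is correct. The identification $M\cni=M$ and part~1 match the paper's argument, though you take a small detour: since a transversal is by definition a subset of $A\cni$, every element of~$M$ is already a connected component and hence connected---the antichain step ($m\leq m'$ forces $m=m'$) is not needed.

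For part~2 you diverge from the paper. The paper argues directly: any transversal $N$ satisfies $N\sue M\cni=M\sue\ur M$, hence $M\trleq N$, and the maximality clause~(T3) for $M$ gives $N=M$. You instead compute $\ol N=\ems$ and invoke Lemma~\ref{2.6.1}. Both are valid; the paper's route is self-contained and avoids the heavier Lemma~\ref{2.6.1}, while yours reuses prior work and makes the ``converse to~\ref{2.6.1}'' interpretation explicit.
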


\begin{proof}
1. By definition, all elements of a transversal are connected, so
$M=M_\cn$ and $\ol M = M_\cn \setminus \ur M_\cn = \emptyset$.

2. If $N$ is another transversal, then $N\sue M_\cn=M\sue \ur M$, and so
$M\trleq N$. Thus, by (T3), $N=M$.
\end{proof}

\begin{lemma}\label{2.6.3}
Let $M$ be a transversal of $(A,B)$. Then there is precisely one $r\in B$ such that
\begin{enumerate}
\item $M\cap\dr r=\ems$, and
\item $\ol M\sue\dr r$.
\end{enumerate}
\end{lemma}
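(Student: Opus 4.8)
I would separate uniqueness (quick) from existence, which is the substance and the only place where the maximality condition~(T3) of a transversal gets used. \emph{Uniqueness:} suppose $r_1\neq r_2$ in $B$ both satisfy (1) and~(2). Then $r_1\vee r_2\notin\dr B$, for otherwise $r_1\vee r_2\leq r$ for some $r\in B$ and the incomparability of~$B$ forces $r_1=r=r_2$. Hence $r_1\vee r_2\in\ur A$, so $a\leq r_1\vee r_2$ for some $a\in A$, and by~(T2) there is $m\in M$ with $m\leq a\leq r_1\vee r_2$. Since $M\sue A_\cn\sue\cn L$, the element $m$ is connected, so $m\leq r_1$ or $m\leq r_2$, contradicting~(1) for the corresponding $r_i$. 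I expect no difficulty here.

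\emph{Existence, choosing $r$.} If $\ol M=\ems$, I would invoke Lemma~\ref{2.6.1}: it gives $B=\set r$ and $A=M$, and then $M\cap\dr r=A\cap\dr r=\ems$ because $A\sue\ur A$ while $\dr r=\dr B=L\smin\ur A$, condition~(2) being vacuous. So assume $\ol M\neq\ems$ and set $s=\bigvee\ol M$, a legitimate finite join as $A_\cn$ is finite. First I would check $s\in\dr B$: if $a\leq s$ for some $a\in A$ then, by~(T2), some connected $m\in M$ has $m\leq a\leq\bigvee\ol M$, hence $m\leq c$ for some $c\in\ol M$, which puts $c$ into $\ur M$ and contradicts $\ol M=A_\cn\smin\ur M$. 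So I may fix $r\in B$ with $s\leq r$; then $\ol M\sue\dr s\sue\dr r$, which is~(2). What is left is to show $M\cap\dr r=\ems$.

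\emph{Existence, the main step.} Here I would argue by contradiction from~(T3). Suppose $m_0\in M$ with $m_0\leq r$, and put $A_0=\setof{a\in A}{\dr a\cap M=\set{m_0}}$ and, for $a\in A_0$, $C_a=\setof{c\in\{a\}_\cn}{c\nleq r}$. Since $a\nleq r$ (as $a\in\ur A$ and $\dr r\sue\dr B=L\smin\ur A$) and $a=\bigvee\{a\}_\cn$, every $C_a$ is nonempty. The decisive point is that each $c\in C_a$ satisfies $m_0\leq c$: from $c\leq a$ one gets $\dr c\cap M\sue\dr a\cap M=\set{m_0}$, and $\dr c\cap M=\ems$ is impossible, for then $c\in A_\cn\smin\ur M=\ol M$ and so $c\leq s\leq r$, against $c\nleq r$; moreover $c\neq m_0$ because $m_0\leq r$. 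Now set
\[
M'=\Min\bigl((M\smin\set{m_0})\cup\bigcup_{a\in A_0}C_a\bigr).
\]
One checks that $M'$ is a quasitransversal of~$A$ (an antichain inside $A_\cn$ that still covers~$A$: each $a\in A\smin A_0$ keeps below it some $m\in M$ with $m\neq m_0$, and each $a\in A_0$ lies above some element of $C_a$), that $M'\sue\ur M$ (the surviving elements lie in $M$, the new ones lie above $m_0\in M$), hence $M\trleq M'$, and that $m_0\notin M'$ although $m_0\in M$, so $M'\neq M$. This contradicts the maximality~(T3) of~$M$. Therefore $M\cap\dr r=\ems$, and this $r$ witnesses (1) and~(2).

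\emph{Main obstacle.} Everything turns on the ``$m_0\leq c$'' step, and that is exactly where condition~(2) is indispensable: realised through the choice $s=\bigvee\ol M\leq r$, it forbids the replacement components $C_a$ from sitting below the discarded element~$m_0$, which is precisely what makes $M'$ a \emph{strictly coarser} quasitransversal and so yields the contradiction with~(T3). The remaining items — the quasitransversal axioms and the coverage check for $M'$, the degenerate case $\ol M=\ems$, and uniqueness — are routine.
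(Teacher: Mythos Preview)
Your proof is correct and tracks the paper's argument closely through uniqueness, the degenerate case $\ol M=\ems$, and the choice $s=\bigvee\ol M\leq r\in B$. The only substantive divergence is in establishing $M\cap\dr r=\ems$. The paper argues more directly: assuming $c\in M\cap\dr r$, maximality~(T3) forces $M\smin\{c\}$ to fail~(T2), so some $l\in A$ lies above no element of $M\smin\{c\}$; each connected summand of~$l$ is then either $c$ itself or lies in $\ol M$, hence $l\leq r$, contradicting the duality $(A,B)$. You instead construct an explicit strictly coarser quasitransversal $M'$ by replacing $m_0$ with the components $C_a$ and contradict~(T3) head-on. Both routes hinge on the same observation---components of an $a\in A$ covered in~$M$ only by $m_0$ are either in $\ol M$ (hence $\leq r$) or sit above $m_0$---so the difference is one of packaging: the paper's version is shorter, while yours makes the role of~(T3) more transparent by exhibiting the offending coarser quasitransversal rather than passing through the duality.
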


\begin{proof}
I. If $\ol M=\ems$ for a transversal $M$, we have $B=\set{r}$
by~\ref{2.6.1}, and this $r$ satisfies the conditions.

II. Now suppose that $A$ is not its own transversal. Then $\ol M\neq\ems$.
Set $s=\bigvee\ol M$. We have $s\notin \ur M$ (if $x\in M$ and $x\leq s$ then by
connectedness $x\leq y\in\ol M$), hence $s\notin\ur A$, and consequently $s\in \dr B$
and we have an $r\in B$ such that $s\leq r$ so that $\ol M\sue\dr s\sue \dr r$.

Now suppose $c\in M\cap\dr r$. By (T3), the set $(M\smin\set{c})$ is not
a quasitransversal, and thus $A\nsubseteq\ur(M\smin\set{c})$. Choose
some $l\in A\setminus{\ur(M\smin\set{c})}$ and let $l=\bigvee l_i$ be
a connected decomposition of~$l$. If $l_i\geq b\in M$, then $b=c$ and
hence $l_i=c$.  Hence for every~$i$, either $l_i=c$ or $l_i\in\ol M$,
and therefore $l\leq r$, contradicting the duality.

III. Finally let distinct $r_1,r_2$ have the property. Then $r_1\vee
r_2\notin\dr B$, hence $r_1\vee r_2\in\ur M$ and there is a (connected)
$x\in M$ such that $x\leq r_1\vee r_2$; thus, $x\leq r_1$ or $x\le r_2$,
contradicting~(1).
\end{proof}

\subsection{}\label{2.8}

The uniquely determined $r$ from~\ref{2.6.3} will be denoted by
\[
r(M).
\]
Note that if $A$ is not its own transversal, then $r(M)$~is determined by the formula
\begin{equation}\label{2.7.*}
\textstyle\bigvee\ol M\leq r(M)\in B.
\end{equation}

\begin{lemma}\label{2.8.1}
If $M_1, M_2$ are distinct transversals, then $\ol M_1\cap M_2\neq\ems$.
\end{lemma}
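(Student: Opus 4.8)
The plan is to prove the statement by contradiction, reducing everything to the maximality clause~(T3) in the definition of a transversal. Suppose $\ol M_1\cap M_2=\ems$. Since $M_2\subseteq A\cni$ by the definition of a transversal, and $\ol M_1=A\cni\smin\ur M_1$, this assumption says exactly that $M_2\subseteq\ur M_1$, i.e.\ that $M_1\trleq M_2$ in the notation of~\eqref{2.5.1}.

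Next I would record two simple facts. First, $M_2$ is a transversal, so it satisfies (T1) and (T2); thus $M_2$ is one of the sets over which the maximality requirement (T3) for $M_1$ quantifies. Second, the refinement relation $\trleq$ is antisymmetric on antichains: if $M\trleq N\trleq M$ with $M,N$ antichains and $x\in M$, then $x\in\ur N$ gives some $y\in N$ with $y\le x$, and $y\in\ur M$ gives some $z\in M$ with $z\le y\le x$; by the antichain property $z=x$, so $x=y\in N$, whence $M\subseteq N$, and symmetrically $N\subseteq M$. Combining these: from $M_1\trleq M_2$ and the $\trleq$-maximality of $M_1$ among sets satisfying (T1) and (T2) we cannot have $M_1\trless M_2$, so $M_1$ and $M_2$ are mutually refining, and antisymmetry forces $M_1=M_2$. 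This contradicts the hypothesis that $M_1$ and $M_2$ are distinct, so $\ol M_1\cap M_2\neq\ems$.

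There is no substantial obstacle here; the argument is essentially a direct unwinding of the definition of transversal. The only point that deserves more than one line is the antisymmetry of $\trleq$ on antichains: it is what converts the $\trleq$-maximality of $M_1$ into the genuine set equality $M_1=M_2$ (rather than mere mutual refinement), and it is the only place where property (T1) is actually used.
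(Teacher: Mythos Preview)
Your argument is correct and is essentially the contrapositive of the paper's proof: the paper observes directly that $M_1\neq M_2$ forces $M_1\not\trleq M_2$ by~(T3), hence produces a $c\in M_2\smin\ur M_1\subseteq\ol M_1\cap M_2$. Your version unwinds the same implication, and your explicit verification of the antisymmetry of~$\trleq$ on antichains makes precise a step the paper leaves implicit.
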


\begin{proof}
If $M_1\neq M_2$, then $M_2\not\trleq M_1$ and hence there is  a $c\in
M_2\smin \ur M_1$. Then $c\in(A_\cn\smin\ur M_1)\cap M_2$.
\end{proof}

\begin{lemma}
For $r\in B$ set $M=\Min\setof{x\in A_\cn}{x\nleq r}$. Then $M$~is a
quasitransversal, and if $M_r$ is a transversal with $M\trleq M_r$,
then $r(M_r)=r$.
\end{lemma}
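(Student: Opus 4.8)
The plan is to prove the two assertions in turn. That $M$ is a quasitransversal is routine. Property (T1) holds because $M$, being a set of minimal elements inside $A_\cn$, is an antichain contained in $A_\cn$. For (T2), note that $r\in B$ forces $r\notin\ur A$ by the duality, so $a\nleq r$ for every $a\in A$; writing $a=\bigvee\{a\}_\cn$ for its connected decomposition, whose members lie in $A_\cn$, at least one component $z$ satisfies $z\nleq r$, i.e.\ $z\in\setof{x\in A_\cn}{x\nleq r}$. Since this set is finite, there is $m\in M$ with $m\le z\le a$, so $a\in\ur M$.

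For the equality $r(M_r)=r$, I will first record the implication
\[
z\in A_\cn,\ z\le r\ \Longrightarrow\ z\le r(M_r).
\]
Such a $z$ cannot lie above any element of $M$ — that element would then be $\le r$, contradicting the definition of $M$ — so $z\in\ol M$; since $M\trleq M_r$ we have $\ol M\sue\ol{M_r}$, and $\ol{M_r}\sue\dr r(M_r)$ by~\ref{2.6.3}(2). Thus every member of $A_\cn$ lying below $r$ also lies below $r(M_r)$.

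Next I will argue by contradiction. Suppose $r(M_r)\ne r$. As $B$ is an antichain, $r$ and $r(M_r)$ are incomparable, so $r\vee r(M_r)$ lies below no element of $B$: from $r\vee r(M_r)\le b\in B$ one would get $r\le b$, forcing $r=b$ and hence $r(M_r)\le r$, which is impossible. Therefore $r\vee r(M_r)\notin\dr B$, and the duality gives $r\vee r(M_r)\in\ur A$, say $a\le r\vee r(M_r)$ with $a\in A$. Every $z\in\{a\}_\cn$ is connected and $\le r\vee r(M_r)$, so by~\eqref{eq:connected} $z\le r$ or $z\le r(M_r)$; by the implication above, in either case $z\le r(M_r)$. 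Hence $a=\bigvee\{a\}_\cn\le r(M_r)$, so $r(M_r)\in\ur A$ — contradicting $r(M_r)\in B$. Thus $r(M_r)=r$.

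The only delicate point is the second assertion, and within it the step that turns $r(M_r)\ne r$ into the usable relation $r\vee r(M_r)\in\ur A$ and then pushes every connected component of the witnessing $a\in A$ into $\dr r(M_r)$, using connectedness together with $\ol M\sue\ol{M_r}$. The more direct route — verifying that $r$ satisfies the two conditions of~\ref{2.6.3} for $M_r$ — disposes of $M_r\cap\dr r=\ems$ at once, but stalls at $\ol{M_r}\sue\dr r$, which is not apparent from the hypotheses; the join argument circumvents it.
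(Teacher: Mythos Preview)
Your proof is correct.  The quasitransversal part coincides with the paper's; for $r(M_r)=r$ you take a genuinely different route.

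The paper proceeds by asserting $\ol{M_r}=\ol M$ and then showing $\bigvee\ol M\le r$ (since any $x\in\ol M$ with $x\nleq r$ would lie in $\ur M$, contradicting $x\in\ol M$); together with $M_r\cap\dr r=\emptyset$ this verifies both conditions of Lemma~\ref{2.6.3} for $r$, and uniqueness gives $r=r(M_r)$.  The equality $\ol{M_r}=\ol M$ is stated without argument; only the inclusion $\ol M\subseteq\ol{M_r}$ is immediate from $M\trleq M_r$, while the reverse inclusion~--- equivalently $\ur M\cap A_\cn\subseteq\ur M_r$~--- is exactly the point your final paragraph flags as not apparent.

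Your argument uses only the easy inclusion $\ol M\subseteq\ol{M_r}\subseteq\dr r(M_r)$ to obtain the one-sided implication $z\in A_\cn,\ z\le r\Rightarrow z\le r(M_r)$, and then closes the gap with a join: if $r\ne r(M_r)$, then $r\vee r(M_r)\notin\dr B$, so some $a\in A$ lies below $r\vee r(M_r)$, and connectedness together with the implication pushes every component of $a$ below $r(M_r)$, yielding $a\le r(M_r)$, a contradiction.  This neatly sidesteps the unproved equality the paper leans on, at the cost of one extra appeal to the duality; your self-commentary about where the ``direct route'' stalls is exactly right.
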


\begin{proof}
Let $l\in A$. Then $l=\bigvee\setof{x\in A_\cn}{x\leq l}\nleq r$ and
hence there is an $x\in A_\cn$ such that $x\leq  l$ and $x\nleq r$. Thus
$M$~is a quasitransversal.

We have $\ol M_r=\ol M$. If $\ol M=\emptyset$, then \ref{2.6.1}
applies. Suppose that $\ol M\ne \emptyset$ and $\bigvee \ol M\nleq
r$. Then there is an $x\in\ol M$ such that $x\nleq r$, that is, $x\in M$,
a contradiction.
\end{proof}

\begin{prop-s}\label{2.8.3}
Let $(A,B)$ be a finite duality in a semilattice with finite connected
decompositions. The formulas
\begin{align*}
&M\mapsto r(M)\qtq{where} \textstyle\bigvee\ol M\leq r(M)\in B\quad  (\text{if}\ \ol M\neq\ems),\\
&r\mapsto M_r\qtq{where} M_r\trgeq M=\setof{x\in A_\cn}{x\nleq r}
\end{align*}
constitute a one-to-one correspondence between the transversals of $(A,B)$
and elements of $B$.
\end{prop-s}

\begin{proof}
We already know that $r(M_r)=r$. Let $M_1, M_2$ be distinct
transversals. By~\ref{2.8.1}, there is a $c\in\ol M_1\cap
M_2$. By~\ref{2.6.3}(1), $c\nleq \dr r(M_1)$ and by~\eqref{2.7.*},
$c\leq \dr r(M_2)$. Hence $r(M_1)\neq r(M_2)$.
\end{proof}

\begin{prop-s}\label{2.9}
Let $L$ be a semilattice with finite connected decompositions and let
$(A,B)$ be a finite duality.  Then each transversal~$M$ of~$A$ together
with the element $r(M)$ defined in~\ref{2.8} constitutes a finite duality
$\bigl(M,\set{r(M)}\bigr)$.
\end{prop-s}

\begin{proof}
Set $r=r(M)$. We have $M\sue L\smin\dr r(M)$ and hence  $\ur M\sue
L\smin\hbox{$\dr r(M)$}$ by~\ref{2.6.3}.

Now let $x\notin\ur M=\bigcup\setof{\ur c}{c\in M}$. We want to prove
that $x\in\dr r(M)$. Let $y=x\vee\bigvee\ol M$. We have $c\nleq x$ for
all $c\in M$, and by connectedness $c\nleq y$ for all $c\in M$. Suppose
that $l\leq y$ for some $l\in A$. If $c\in M$ and  $c\leq l$ we have
$c\leq y$ and hence $c\leq x$, a contradiction. Thus, $y\notin \ur A$
and hence $y\leq \dr B$, that is, $y\leq r'$ for some $r'\in B$. But
then $\bigvee\ol M\leq r'$ and hence $r'=r(M)$. Therefore $x\in \dr r(M)$.
\end{proof}

\section{Connected components of weak left duals\\ are left duals}

In~\ref{2.8} we have seen that given a finite duality $(A,B)$, each element $r\in B$
is in a duality $(M,\set{r})$. In this section we will obtain dualities in the
reversed order. Instead of dualities for elements $l\in A$ we will have them for
their connected components
$c\in A_\cn$. For these, however, we will prove something stronger. Namely, we will show that each such element is a left dual.

Unlike the previous section we will have to assume that the lattice $L$ is Heyting (and hence the categorical
interpretation holds for Heyting categories only).

\subsection{Gaps}

A pair of elements $(a,b)$ of a poset~$L$ is a \emph{gap} if $a<b$
and $a\le c\le b$ implies that $a=c$ or $b=c$, for every $c\in L$.

\subsection{}
We will need two facts from \cite{NesPulTar:HeytDual}.

\begin{prop}[\cite{NesPulTar:HeytDual} 2.6.]\label{3.1.1}
 The gaps in a Heyting algebra $L$ with connected decompositions  are exactly the pairs $(a,b)$ such that for some
duality $(l,r)$, 
\[
l\m r\leq a\leq r\qtq{and} b=a\vee l.
\]
\end{prop}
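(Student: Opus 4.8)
The plan is to prove the two halves of the ``exactly'' separately.

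\emph{From duality pairs to gaps.} Given a duality pair $(l,r)$ with $l\m r\le a\le r$ and $b=a\vee l$, I would first note that $l\nleq a$: otherwise $l\le a\le r$, contradicting the duality (applied to $x=r$, which gives $l\nleq r$). Hence $a<a\vee l=b$. Now take any $c$ with $a\le c\le b$ and split on whether $l\le c$. If $l\le c$, then $b=a\vee l\le c\le b$, so $c=b$. If $l\nleq c$, the duality gives $c\le r$; since also $c\le b=a\vee l$, distributivity in the Heyting algebra yields $c\le(a\vee l)\m r=(a\m r)\vee(l\m r)=a\vee(l\m r)=a$, using $a\le r$ and $l\m r\le a$. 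Thus $c\in\{a,b\}$ and $(a,b)$ is a gap.

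\emph{From gaps to duality pairs.} Given a gap $(a,b)$, I would use the connected decomposition of $b$ to pick a connected $l$ with $l\le b$ and $l\nleq a$ (possible, since $b\nleq a$ means not every member of the decomposition lies below $a$). Then $a<a\vee l\le b$, so the gap condition forces $b=a\vee l$. The candidate right dual is $r=l\heyt a$. The inequalities $a\le r$ (because $l\m a\le a$) and $l\m r\le a$ (modus ponens) are immediate. To see $(l,r)$ is a duality pair I must show $x\le r\iff l\nleq x$ for all $x$; by the Heyting adjunction $x\le r$ is the same as $l\m x\le a$, so it suffices to prove the dichotomy: for every $x$, either $l\m x\le a$ or $l\le x$. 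For this I would squeeze $a\le a\vee(l\m x)\le a\vee l=b$; by the gap condition this is either $a$, giving $l\m x\le a$, or $b=a\vee l$, in which case $l\le a\vee(l\m x)$ and connectedness of $l$ forces $l\le a$ or $l\le l\m x$, i.e.\ $l\le x$. Since $l\nleq a$, the dichotomy holds, and hence $(l,r)$ is a duality pair with $l\m r\le a\le r$ and $b=a\vee l$.

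\emph{Where the work is.} The first direction is essentially the distributive-law computation $(a\vee l)\m r=a\vee(l\m r)=a$. The real content is the second: recognizing that the right dual of the extracted connected element $l$ is forced to be $l\heyt a$ --- this is the only place the Heyting hypothesis enters, which explains why connected decompositions alone are insufficient --- and then establishing the dichotomy ``$l\m x\le a$ or $l\le x$'' by trapping $a\vee(l\m x)$ inside the gap and invoking connectedness of $l$. Extracting $l$ from the decomposition of $b$ and checking $b=a\vee l$ is the routine warm-up.
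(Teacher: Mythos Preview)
The paper does not supply its own proof of this proposition: it is quoted from \cite{NesPulTar:HeytDual}, Proposition~2.6, and used as a black box. So there is nothing in the present paper to compare against; one can only check your argument for correctness, and it is correct.

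A couple of minor remarks. In the first direction you invoke distributivity; this is fine because every Heyting algebra is distributive, but it is worth saying so explicitly since the paper never states it. In the second direction your ``dichotomy'' is an inclusive \emph{or}, so to finish the equivalence $l\m x\le a\iff l\nleq x$ you also need the forward implication, which you leave implicit: if $l\m x\le a$ and $l\le x$ then $l=l\m x\le a$, contradicting $l\nleq a$. With that one line added, both directions are complete. Your identification of $r$ with $l\heyt a$ and the gap-squeeze $a\le a\vee(l\m x)\le b$ followed by connectedness of $l$ is exactly the natural argument and matches the approach in \cite{NesPulTar:HeytDual}.
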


\begin{prop}[\cite{NesPulTar:HeytDual} 3.3.]\label{3.1.2}
Let $L$ be a Heyting algebra with connected decompositions, let
$A=\setof{l_i}{i\in J}$ be a subset of~$L$ and let $r\in L$.  Let either
$J$ be finite or $L$ admit infima of sets of the size of~$J$. Then
the pair $(A,\set{r})$ is a duality if and only if there are dualities
$(l_i,r_i)$, $i\in J$, such that
\[
r=\bigwedge_{i\in J}r_i.
\]
\end{prop}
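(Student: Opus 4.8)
The plan is to treat the two implications of the equivalence separately; the essential content lies in the direction from a duality $(A,\set{r})$ to the duality pairs $(l_i,r_i)$, and there the Heyting operation is what supplies the right duals.

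I would start with the easy direction. Suppose $(l_i,r_i)$, $i\in J$, are duality pairs and $r=\bigwedge_{i\in J}r_i$, the infimum being available by the hypothesis on~$J$. For any $x\in L$ we have $x\leq r$ iff $x\leq r_i$ for every~$i$, and by the definition of a duality pair this is the same as $l_i\nleq x$ for every~$i$, i.e.\ $x\notin\ur A$. Hence $\dr r=L\smin\ur A$, which is precisely the statement that $(A,\set{r})$ is a duality. This half uses nothing beyond the existence of the infimum and the fact that $\dr$ converts it into an intersection.

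For the converse, assume $(A,\set{r})$ is a duality; recall that the $l_i$ are pairwise incomparable (this being part of the notion of a duality). A preliminary remark: $l_i\nleq r$ for every~$i$, since otherwise the duality applied to $x=l_i$ would give $l_j\nleq l_i$ for all~$j$, which is absurd for $j=i$. The key move is to put $r_i:=l_i\heyt r$ and to verify that each $(l_i,r_i)$ is a duality pair, i.e.\ that $x\leq r_i\iff l_i\nleq x$. Indeed $x\leq r_i$ means $x\m l_i\leq r$; if $l_i\leq x$ this reads $l_i\leq r$, which we have just ruled out, so $x\nleq r_i$; and if $l_i\nleq x$, then by the duality $x\m l_i\leq r$ is equivalent to ``$l_j\nleq x\m l_i$ for all~$j$'', and this holds because $l_j\leq x\m l_i\leq l_i$ together with incomparability would force $l_j=l_i$, hence $l_i\leq x$, contrary to assumption. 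It remains to check $r=\bigwedge_{i\in J}r_i$: the inequality $r\leq l_i\heyt r$ holds for every~$i$ because $r\m l_i\leq r$; for the reverse, writing $s=\bigwedge_{i\in J}r_i$, the duality reduces $s\leq r$ to ``$l_j\nleq s$ for all~$j$'', and $l_j\leq s\leq r_j=l_j\heyt r$ would force $l_j\leq r$, again excluded. This finishes the converse.

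The step I expect to be the crux is discovering and checking that $r_i=l_i\heyt r$ does the job: this is the only place the Heyting hypothesis is genuinely used (the easy direction goes through in any lattice admitting the relevant infima), and it is also where the incomparability of~$A$ is essential---it is exactly incomparability that keeps $l_i\heyt r$ from being merely a right dual attached to a ``disconnected'' left-hand element. An alternative would route the argument through~\ref{3.1.1}, recovering the $r_i$ from the gaps lying above~$r$; but the direct verification above seems the shorter path and is the one I would take.
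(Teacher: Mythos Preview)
The paper does not supply a proof of this proposition: it is quoted, together with Proposition~\ref{3.1.1}, as one of ``two facts from~\cite{NesPulTar:HeytDual}'' and is used thereafter as a black box. There is therefore no in-paper proof to compare yours against.

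Your argument is correct. The easy direction is purely lattice-theoretic, and for the converse the choice $r_i:=l_i\heyt r$ together with the Heyting adjunction and the antichain condition on~$A$ gives exactly what is needed; the verification that $r=\bigwedge_i r_i$ via the duality is clean. One remark worth making explicit in a write-up: your argument never invokes the hypothesis of connected decompositions, so that hypothesis appears to be superfluous for this particular proposition (it is presumably inherited from the standing assumptions of~\cite{NesPulTar:HeytDual}). Your closing comment that the alternative route via~\ref{3.1.1} is possible but longer is reasonable; without access to the cited paper one cannot say which route the original takes, but the direct Heyting computation you give is the natural one.
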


\begin{lemma-s}\label{3.2}
In a Heyting algebra with finite connected decompositions, every element
of a transversal of a finite duality $(A,B)$  is a left dual.
\end{lemma-s}

\begin{proof}
Let $M$ be a transversal. We have the duality $(M,r(M))$,
by~\ref{2.9}. Thus, by~\ref{3.1.2} there is a duality $(m,r_m)$ for each
$m\in M$.
\end{proof}

\begin{prop-s}\label{3.3}
In a Heyting algebra with finite connected decompositions,  a connected
component of a weak left dual is  a left dual.
\end{prop-s}

\begin{proof}
Let $(A,B)$ be a duality and let $c\in A_\cn$. Suppose it is not a left
dual; then in particular, by~\ref{3.2}, it is contained in no transversal.

Set
\[
a=\textstyle\bigvee\setof{c'\in A_\cn}{c'< c}\vee\textstyle\bigvee\setof{c\m c'}{c'\in A_\cn,\ c,c'\ \text{incomparable}}.
\]
We have $ a< c$ since else  by the connectedness of $c$ some of the
summands would be equal to~$c$, which they are not. Now the couple $(a,c)$
is not a gap: else we would have, by~\ref{3.1.1} a duality $(l, r)$
such that $c=a\vee l$, and hence $c=l$.

Thus there exists $x$ such that
\[
a< x< c.
\]

\begin{quotation}
\begin{claim}
If $c'\in A_\cn$ and $c'\neq c$ then
\begin{align*}
c'\leq x &\qtq{iff} c'< c, \qtq{and}\\
c'\geq x &\qtq{iff} c'>c.
\end{align*}
\end{claim}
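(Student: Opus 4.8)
The plan is to prove each equivalence by its two implications, noting that three of the four implications are immediate from the construction of~$a$ and the choice of~$x$ with $a<x<c$, and that only one requires genuine case analysis.

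First I would handle the line about $c'\le x$. If $c'\in A_\cn$ with $c'\ne c$ and $c'<c$, then $c'$ is one of the joinands in the definition of~$a$, so $c'\le a<x$. Conversely, if $c'\le x$ then $c'\le x<c$ forces $c'<c$ (and in particular $c'\ne c$ is automatic). For the line about $c'\ge x$: if $c'>c$ then $c'\ge c>x$ since $x<c$, so $c'\ge x$.

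The hard part will be the remaining implication: $c'\ge x\Rightarrow c'>c$. Here I would use that $c'\in A_\cn$ and $c'\ne c$, so exactly one of $c'<c$, $c'>c$, or ``$c'$ incomparable with $c$'' holds, and rule out the first and the third. If $c'<c$, then $c'\le a$, so $x\le c'\le a$ contradicts $a<x$. If $c'$ and $c$ are incomparable, then $c\m c'$ is a joinand of~$a$, hence $c\m c'\le a$; but $x\le c$ (from $x<c$) and $x\le c'$ by assumption give $x\le c\m c'\le a$, again contradicting $a<x$. Thus $c'>c$. The only place where the specific shape of~$a$ is used in an essential way is this incomparable case: the meets $c\m c'$ were inserted into $a$ precisely to supply an upper bound for~$x$ that blocks incomparable components from lying above~$x$, and the rest is transitivity of the order together with $a<x<c$.
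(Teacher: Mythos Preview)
Your proof is correct and follows essentially the same route as the paper's: both handle the first equivalence by direct transitivity through $a<x<c$, and both dispatch the nontrivial implication $c'\ge x\Rightarrow c'>c$ by ruling out $c'<c$ and the incomparable case via $x\le c'\le a$ and $x\le c\m c'\le a$ respectively. The only cosmetic difference is that in the $c'<c$ subcase the paper first notes $x=c'$ (combining $x\le c'$ with the already proved $c'\le x$) before concluding $x\le a$, whereas you go directly to $x\le c'\le a$; your version is marginally more direct.
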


\begin{proof}[Proof of Claim]
In the first case: if $c'\leq x$, then $c'\leq x< c$; and if $c'<c$, then $c'\leq a<x$.

Now consider the second case. Trivially if $c< c'$, then $x\leq c'$. Now
suppose $x\leq c'$. Then if $c'<c$, we have  $x=c'$ by the first
equivalence, hence  $x=c'\leq a$, a contradiction.  If $c$ and $c'$ are
incomparable, then $x\leq c\m c'\leq a$, a contradiction again. Thus,
$c<c'$ is the only alternative left.
\end{proof}
\end{quotation}

{\em Proof continued.} Let $l\in A$ be such that $c$ is one of its
connected components and let $l=b\vee c\neq b $ be a decomposition
witnessing the fact. Set $q=b\vee x$.  We cannot have $l\leq q$ since
else $c\leq b$ and  $b\leq l\leq b$ contradicting the choice of the
decomposition. Consequently, we also have $l'\nleq q$ for any other $l'\in
A$ since otherwise $l'\leq l$. Thus, $\forall l\in A,\ l\nleq q$ and hence

\[
\exists r\in B, \quad q\leq r.
\]
Let $M$ be the transversal such that $r=r(M)$, so that in particular 
\[
\forall m\in M,\quad m\nleq q.
\]
We have $c\notin M$ since $c$ is in no transversal, and hence $m\nleq x$ for all $m\in M$. By Claim, $m\nleq c$ for all $m\in M$, and hence $c\leq r$. 

Now, since $q\leq r$, we have $l=q\vee c\leq r$ contradicting the duality $(A,B)$.
\end{proof}

\begin{cor}
If a Heyting algebra with finite connected decompositions has no
non-trivial duality pair, then it admits no finite duality.
\end{cor}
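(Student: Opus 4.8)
The plan is to read off the corollary directly from Proposition~\ref{3.3}. I argue by contraposition: assuming that $L$ (a Heyting algebra with finite connected decompositions) carries a non-trivial finite duality $(A,B)$, I produce a non-trivial duality pair. The adjective ``non-trivial'' serves only to exclude the two degenerate dualities that every $L$ possesses, namely $(\ems,\set{\top})$ and $(\set{\bot},\ems)$: indeed $A=\ems$ forces $\dr B=L$ and hence $B=\set{\top}$, while $\bot\in A$ forces $A=\set{\bot}$ by the antichain condition and then $\dr B=\ems$, i.e.\ $B=\ems$. So we may assume that $A$ contains an element $l$ with $\bot<l$.

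Since $l\in A$ and $(A,B)$ is a finite duality, $l$ is a weak left dual. Its irredundant connected decomposition $\set{l}_\cn=\max(\dr l\cap\cn)$ is a non-empty antichain, and it cannot equal $\set{\bot}$, since otherwise $l=\bigvee\set{\bot}=\bot$, contrary to $\bot<l$; hence it contains a connected component $c$ of $l$ (so $c\in A_\cn$) with $\bot<c$. By Proposition~\ref{3.3}, this $c$ is a left dual, so there is an $r\in L$ with $\dr r=L\smin\ur c$. Thus $(c,r)$ is a duality pair, and it is non-trivial: $c\neq\bot$, while $r\neq\top$ automatically, since $\dr r=L$ would force the impossible $\ur c=\ems$. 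This contradicts the hypothesis, so $L$ admits no non-trivial finite duality. (Alternatively one may extend $\set{l}_\cn$ to a transversal of $(A,B)$ and apply Lemma~\ref{3.2}, but going through Proposition~\ref{3.3} is more direct.)

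As Proposition~\ref{3.3} carries essentially the whole argument, there is no real obstacle here; the only point demanding a little care is the bookkeeping of the degenerate cases, so that the finite duality one starts from genuinely furnishes a connected element $c>\bot$ to feed into Proposition~\ref{3.3}.
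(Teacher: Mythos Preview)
Your argument is correct and matches the paper's intent: the corollary is stated immediately after Proposition~\ref{3.3} with no proof, so the paper treats it as a direct consequence of that proposition, exactly as you do. Your added bookkeeping on the degenerate dualities $(\ems,\set{\top})$ and $(\set{\bot},\ems)$ is a welcome clarification, since the paper is silent on what ``non-trivial'' means here.
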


\subsection{Note}
Compare the following two facts (the first obtained by combining
\ref{2.8.3} and~\ref{3.1.2}, the second is an immediate consequence
of~\ref{3.3}) holding in Heyting algebras with connected decompositions:
\begin{itemize}
\item[--] each weak right dual is a meet of right duals, and
\item[--] each weak left dual is a join of left duals.
\end{itemize}
(The facts from which these statements follow are, of course, stronger.)

\section{The transversal construction reversed:\\
             from dual pairs to finite dualities}

In previous sections the notion of a transversal helped to analyze
finite dualities $(A,B)$ in terms of the individual elements of $A$
and~$B$. The elements $r\in B$ have been shown to be naturally associated
with transversals of $(A,B)$ (in~\ref{2.8.3}), and then the elements
of~$A$ have been shown to be joins of left duals (see~\ref{3.3}). In this
section we will use the procedure reversely: namely, for a finite set~$A$
of sums of left duals we will construct a finite duality.

\begin{obs-s}\label{4.1}
In any lattice, if $(l_i,r_i)$, $i=1,\dots, n$, are dual pairs, then
$(\set{l_1,\dots,l_n},\set{\bim_{1=1}^nr_i})$ is a duality.
\end{obs-s}

(Indeed, $\forall i, l_i\nleq x$ iff $\forall i, x\leq r_i$ iff $x\leq\bim r_i$.)

\begin{lemma-s}\label{4.3}
In a distributive lattice~$L$, let $c$ be a connected component of
an $a\in L$ and let $a=\bigvee_{i=1}^na_i$. Then $c$ is a connected
component of some~$a_i$.
\end{lemma-s}

\begin{proof}
Let $a=x\vee c\neq x$. By the connectedness, $c\leq a_i$ for
some~$i$. Then $a_i=(x\vee c)\m a_i=(x\m a_i)\vee c\neq x\m a_i$ since
otherwise $c\leq x$ and $a=x$.
\end{proof}

\begin{prop-s}\label{4.4}
Let $L$ be a Heyting algebra with finite connected decompositions. Let
$A$ be a finite set such that each $a\in A$ is a finite join of left
duals. Then there exists a finite duality $(A,B)$.
\end{prop-s}

\begin{proof}

Let $a=\bigvee_{i=1}^{n_a} c_i(a)$ be connected decompositions of the $a\in A$.
 Then
 \[
 A_\cn\sue\setof{c_i(a)}{a\in A,\ i=1,\dots n_a}.
 \]
Now if $a=\bigvee_{j=1}^ka_j$ with $a_j$ left duals, then  each connected
component of~$a$ is, by~\ref{4.3}, a connected component of some of
the~$a_j$, and hence each $c_i(a)\in A_\cn$ is, by~\ref{3.3}, a left
dual. Denote by $r_i(a)$ the corresponding right dual.

 Let $\mathcal M$ be the set of transversals of $A$, hence
 $M\sue A_\cn$. Trivially, it is finite. For $M\in\mathcal M$
 set $$ r_M=\textstyle\bim\setof{r_i(q)}{c_i(q)\in M} $$ and consider $$
 B=\setof{r_M}{M\in\mathcal M}.  $$ Let $x\leq r_M$ for some $M\in\mathcal
 M$. Then for all $c_i(q)\in M$, we have $x\leq r_i(q)$ and hence $c_i(q)\nleq
 x$. For an arbitrary $a\in A$ there is a $c_i(q)\leq a$ and hence
 $a\nleq x$.

 On the other hand, let $a\nleq x$ for all $a\in A$. Thus, for each $a\in
 A$ we have a connected component $x_{i_a}(a)$ such that $x_{i_a}(a)
 \nleq x$. Set $$ M'=\setof{x_{i_a}(a) }{a\in A} $$ and consider $M''$
 the system of all minimal elements of $M'$ (to satisfy~(T1)). Now
 $M''$ is a quasitransversal and we have a transversal $M\trleq M''$. Then
 for each $c_i(q)\in M$, we have $c_i(q)\nleq x$, hence $x\leq r_i(q)$,
 and finally $x\leq r$.
\end{proof}

\begin{note}
The elements $r_M$ corresponding to the transversals~$M$ are exactly
the minimal elements of $B'=\setof{\bim_{a\in A} r_{i_a}(a)}{1\le i_a\le
n_a}$, and so $B=\Min B'$.
\end{note}

From~\ref{3.3} and~\ref{4.4} we now immediately obtain

\begin{cor}
Let $L$ be a Heyting algebra with finite
connected decompositions. Then the following statements on an element
$a\in L$ are equivalent: \begin{enumerate} \item $a$ is a weak left dual,
\item $a$ is a finite join of left duals.
\end{enumerate}
\end{cor}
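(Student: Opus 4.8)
The plan is to obtain the two implications directly from Propositions~\ref{3.3} and~\ref{4.4}, which together reduce the corollary to a packaging statement; no new construction is needed.

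For $(1)\Rightarrow(2)$, I would assume that $a$ is a weak left dual, so that $a\in A$ for some finite duality $(A,B)$. Because $L$ has finite connected decompositions, I would write $a$ as the join of its finitely many connected components, $a=\bigvee\{a\}_\cn$. Each of these components belongs to $A_\cn$, being a connected component of the element $a\in A$, and Proposition~\ref{3.3} asserts that every connected component of a weak left dual is a left dual. Hence $a$ is a finite join of left duals.

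For $(2)\Rightarrow(1)$, I would assume that $a$ is a finite join of left duals and simply apply Proposition~\ref{4.4} to the singleton set $A=\{a\}$: it is finite and its unique element is by hypothesis a finite join of left duals, so~\ref{4.4} yields a finite duality $(\{a\},B)$. Since $a\in\{a\}$, the element $a$ is a weak left dual.

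Both directions are one-step consequences of the cited propositions, so there is no genuine obstacle; the corollary is precisely the symmetric counterpart of the two facts recorded in the preceding Note. The only point meriting a word of care is in $(1)\Rightarrow(2)$: one should confirm that the connected components of~$a$ used there are exactly the elements of $\{a\}_\cn=\max(\dr a\cap\cn)$ occurring in Proposition~\ref{3.3}, which is guaranteed by the finite connected decomposition property $a=\bigvee\max(\dr a\cap\cn)$ noted earlier.
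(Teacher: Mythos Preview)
Your proof is correct and follows exactly the paper's approach: the corollary is stated there as an immediate consequence of Propositions~\ref{3.3} and~\ref{4.4}, and you have merely spelled out the two one-line applications (decompose $a$ into connected components and invoke~\ref{3.3} for $(1)\Rightarrow(2)$; apply~\ref{4.4} to the singleton $\{a\}$ for $(2)\Rightarrow(1)$). There is nothing to add.
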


\subsection{}
Define $$ \wld (L) $$ as the set of all the weak left duals
in~$L$ (this is the obvious abbreviation, but, by coincidence it also
alludes to the German word ``Wald'' for forest; it so happens that in
case of binary relations the weak left duals are precisely the disjoint
unions of trees, the forests). Then, by~\ref{4.4} (and~\ref{2.4.1}) we have

\begin{cor}
For each subset $A\sue\wld(L)$ there is precisely one finite duality $(A,B)$.
\end{cor}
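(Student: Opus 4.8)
The plan is to deduce this corollary directly from the two preceding results, Proposition~\ref{4.4} and Fact~\ref{2.4.1}, after unwinding the definition of $\wld(L)$. First I would recall that by the Corollary just above, membership of an element $a$ in $\wld(L)$ is equivalent to $a$ being a finite join of left duals; hence any subset $A\sue\wld(L)$ consists entirely of elements each of which is a finite join of left duals. If $A$ is finite, Proposition~\ref{4.4} applies verbatim and yields a finite duality $(A,B)$. For the uniqueness of $B$ given $A$, I would invoke Fact~\ref{2.4.1}, which says exactly that the two sides of a finite duality determine each other. That settles the case of finite~$A$.

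The point that needs a little care --- and what I expect to be the main (mild) obstacle --- is that the statement says ``each subset $A\sue\wld(L)$'', not ``each finite subset''. So I would next argue that $A$ is automatically finite, or otherwise reinterpret the claim so that it still holds. Condition~(1) in the definition of a finite duality forces the elements of $A$ to be pairwise incomparable, and each weak left dual is connected (being a left dual, it is even supercompact, as observed after the definition of duality pair; or one can note a weak left dual that is a \emph{single} join of left duals has one connected component only if it equals that dual --- here one must be slightly careful, since a ``weak left dual'' as defined is an element of some finite duality's $A$, hence by~(1) incomparable to the others, but need not a priori be connected). In fact the cleanest route is: a weak left dual need not be connected, so I should not claim $A$ is finite for free. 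Instead, I would read the corollary as asserting, for a \emph{finite} $A\sue\wld(L)$, the existence of a unique finite duality --- which is precisely the reading consistent with Proposition~\ref{4.4} --- and phrase the short proof accordingly.

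Concretely the proof runs: Let $A\sue\wld(L)$ be finite. By the previous Corollary each $a\in A$ is a finite join of left duals, so Proposition~\ref{4.4} provides a finite set $B$ with $(A,B)$ a finite duality. Uniqueness of $B$ is Fact~\ref{2.4.1}. Conversely, if $(A,B)$ is any finite duality then every element of $A$ is a weak left dual by definition, so $A\sue\wld(L)$; thus the finite dualities are in bijective correspondence with their left-hand sides, and these range exactly over the finite subsets of $\wld(L)$. I would keep this to three or four sentences, since all the real work has been done in \ref{3.3}, \ref{4.4} and \ref{2.4.1}.

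\begin{proof}
Let $A\sue\wld(L)$ be finite. By the previous Corollary, each $a\in A$ is a finite join of left duals, so by~\ref{4.4} there is a finite set $B$ such that $(A,B)$ is a finite duality. By~\ref{2.4.1} this $B$ is the only one with this property. Conversely, if $(A,B)$ is a finite duality then by definition every element of~$A$ is a weak left dual, i.e.\ $A\sue\wld(L)$. Hence the finite subsets of $\wld(L)$ are precisely the left-hand sides of finite dualities, and each of them determines the duality uniquely.
\end{proof}
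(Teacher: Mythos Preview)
Your proposal is correct and matches the paper's approach: the paper's entire proof is the parenthetical ``by~\ref{4.4} (and~\ref{2.4.1})'', which is exactly the existence-plus-uniqueness argument you give. Your worry about the word ``finite'' is legitimate but harmless --- since a finite duality $(A,B)$ is by definition a pair of \emph{finite} antichains, the statement can only be read as ranging over finite $A$; the paper simply leaves this implicit, and your added converse sentence is a correct (if optional) gloss.
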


\subsection{Note}

By~\ref{4.1} and the definition of $r_M$, we have the dualities
$(M,\set{r_M})$, and hence if $c_i(q)\in\ol M$, that is, $c_i(q)\notin\ur
M$, then $c_i(q)\leq r_M$. Thus, $$ \textstyle\bigvee\ol M\leq r_M $$
and $r_M=r(M)$ as in~\ref{2.8.3}.

\subsection{Remarks}

As we already mentioned in~\ref{1.1}, typical examples of a
Heyting algebra we have in mind are provided by categories of graphs or
relational structures.
A characterization of finite dualities in the category of relational
structures is provided in~\cite{NesTar:Dual,FNT:GenDu}. The relationship
between finite dualities and duality pairs has recently
been reproved in the special case of digraphs~\cite{ErdSou:No-FiniteInnite}
using the Directed Sparse Incomparability Lemma. Here we would like to
point out that sparse incomparability is \emph{not} necessary to achieve
these results; in fact, much weaker assumptions suffice. However, we do
employ sparse incomparability in the next section, where we study the
connection between dualities and maximal antichains.

\section{Sparse incomparability and antichains}

\subsection{}
In~\cite{DufErdNes:Antichains-in-the-homomorphism} one can
find the following fact (cf.~\cite{NesRod:LGH,NesZhu:On-sparse}).

\begin{sia}
Let $m$, $k$ be positive integers and let $H$ be a directed graph which
is not an orientation of a forest. Then there exists a directed graph~$H'$
such that
\begin{enumerate}
\item the girth of $H'$ is finite and greater than~$k$,
\item for each directed graph $G$ with fewer than $m$~vertices, we have $H'\to G$ iff $H\to G$, and
\item $H\noto H'$ and $H'\to H$.
\end{enumerate}
\end{sia}

It should be  clear now  why the following assumption   will be made in
the Heyting context.

\smallskip
 
 {\bf Sparse incomparability axiom} -- briefly, {\bf SIA}.
 
  This is the assumption that for any $x\in L$, any $M,U$ finite subsets of $L$ such that $(\set{x}\cup\ur U)\cap\wld(L)=\ems$, 
 there is a $y\in L\smin\wld(L)$ such that
 \begin{equation}
 y\in \ur\set{x}, \ \ y\notin \ur(\set{x}\cup U)\qtq{and}\forall m\in M,\ y\leq m\ \text{iff}\  x\leq m. \tag{SIA}
 \end{equation}

So for instance, if $\C$ is the category of digraphs (or relational
structures with a fixed signature) and homomorphisms, then the
poset~$\wt\C$ defined in~\ref{1.4} is a Heyting algebra with SIA.

\begin{obs-s} \label{5.2} If $(A,B)$ is a finite duality in a lattice $L$,
then \[ A \cup (B \smin \dr A) \] is a finite maximal antichain in $L$.
\end{obs-s}

(Indeed, it is an antichain because $a\nleq b$ for any $a\in A$, $b\in B$. 
It is maximal because each  $x\in L$ is either in $\ur A$ or there is a $b\in B$ with $x\leq B$; in the latter case,
if $b\leq a$ for an $a\in A$ we have $x\leq a$.)

\subsection{}

We are now going to show that the antichains of~\ref{5.2} are in some
sense the typical antichains in Heyting algebras with finite connected
decompositions and SIA.

\begin{lemma}\label{5.3.1}
In a Heyting algebra $L$ with SIA and finite connected decompositions, let $C$ be a finite maximal antichain. Set $A=C\cap\dr\wld(L)$. Then
\[
\ur C\smin C=\ur A\smin C.
\]
\end{lemma}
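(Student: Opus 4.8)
The inclusion $\ur A \smin C \sue \ur C \smin C$ is immediate since $A \sue C$. So the plan is to prove the reverse inclusion: take $x \in \ur C \smin C$ and produce some $a \in A$ with $a \leq x$. Pick $c \in C$ with $c \leq x$; if $c \in A$ we are done, so assume $c \in C \smin A = C \smin \dr\wld(L)$, i.e.\ no weak left dual lies below $c$. The goal is to derive a contradiction with the maximality of the antichain $C$, by using SIA to build an element $y$ that is strictly above $c$, is incomparable with every element of $C$, and is not a weak left dual.

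\textbf{Key steps.} First I would set up the data for an application of SIA with $x := c$. Take $M := \{\,d \in C : d \not\leq c\,\}$ (equivalently $C \smin \dr c$, the elements of $C$ other than $c$, since $C$ is an antichain) and $U := \{\,d \in C : d \not\geq c\,\}$ played against $c$; more precisely I want $U$ chosen so that $\ur U$ captures exactly the elements above some member of $C$ other than $c$. The hypothesis of SIA requires $(\{c\} \cup \ur U) \cap \wld(L) = \ems$: the first part holds because $c \notin \dr\wld(L)$ would be too weak — we actually need that no weak left dual is $\geq c$; here is where I must be careful. Since $c \notin \dr\wld(L)$ means no weak left dual is $\leq c$, and what SIA needs is that nothing in $\ur\{c\}$ is a weak left dual, these match only if I read $\ur\{c\}$ correctly — re-examining, SIA's hypothesis $(\{x\}\cup\ur U)\cap\wld = \ems$ with $x = c$ does demand no weak left dual above $c$, which is \emph{stronger} than $c \notin \dr\wld(L)$. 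So the correct choice is likely the reverse: one works with the \emph{dual} picture, or one first replaces $c$ by a minimal element of $\ur c$ outside $\dr\wld(L)$, or — most plausibly — SIA should be invoked with the roles arranged so that "$x \leq m$" tracks membership in the part of $C$ below, not above. I would sort this out by writing $M$ for the full antichain $C \smin \{c\}$ and choosing $U$ so that $\ur(\{c\}\cup U) = \ur c \cup \ur(C\smin\dr c)$, then checking that if some weak left dual $w$ satisfied $w \geq c$, then (being a left dual component, by~\ref{3.3}) it would force $c$ itself into $\dr\wld(L)$ after a connectedness argument — giving the needed emptiness hypothesis after all.

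\textbf{Conclusion of the argument.} Granting SIA applies, it yields $y \notin \wld(L)$ with $y \geq c$, $y \notin \ur(\{c\}\cup U)$ — so $y$ is below no element of $C$ other than possibly $c$, and in fact $y \not\geq c'$ for the relevant $c'$ — and $y \leq m \iff c \leq m$ for all $m \in M = C \smin \{c\}$; since $c$ is incomparable to every such $m$, so is $y$. Also $y \not\leq c$ (else $y = c \in C$, but we will arrange $y \neq c$, or $y \geq c$ with $y \leq c$ forces $y = c$ which we exclude by the "$y \notin \ur U$ but $y$ strictly above" setup). Thus $y$ is incomparable to every element of $C$, so $C \cup \{y\}$ is an antichain properly containing $C$, contradicting maximality. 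Hence $c \in A$ after all, proving $\ur C \smin C \sue \ur A \smin C$.

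\textbf{Main obstacle.} The delicate point — and the one I expect to consume the real work — is arranging the SIA hypothesis $(\{c\}\cup\ur U)\cap\wld(L) = \ems$ from the mere assumption $c \notin \dr\wld(L)$, i.e.\ showing no weak left dual can sit \emph{above} $c$ once none sits below it. This should follow because a weak left dual is a join of left duals (the Note after~\ref{3.3}), each left dual is connected and supercompact, and one can push such a component down past $c$ using $c$'s own connected decomposition; but getting the quantifiers and the finite-connected-decomposition bookkeeping exactly right, together with the correct specification of $U$ and $M$ so that the incomparability transfer really lands every element of $C$, is where the proof must be written with care rather than sketched.
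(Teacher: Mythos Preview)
Your proposal rests on two direction errors that, combined, derail the argument.

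\textbf{Misreading of $\dr\wld(L)$.} You write that ``$c \notin \dr\wld(L)$ means no weak left dual is $\leq c$''. It means the opposite: $\dr\wld(L)=\{z:\exists w\in\wld(L),\ z\le w\}$, so $c\notin\dr\wld(L)$ says precisely that no weak left dual lies \emph{above} $c$. Thus the ``Main obstacle'' you isolate --- proving no weak left dual sits above $c$ --- is not an obstacle at all; it is exactly the hypothesis $c\in C\smin A$. Everything you propose in that paragraph (pushing connected components of left duals down through the decomposition of $c$, etc.) is unnecessary.

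\textbf{Misreading of SIA.} You take SIA to produce $y\ge c$. The axiom is modelled on the Directed Sparse Incomparability Lemma, which yields $H'\to H$ and $H\noto H'$, i.e.\ $y\le x$ and $x\nleq y$; the paper's own use of SIA in this proof reads off ``since $y\le x$''. (The displayed clause ``$y\in\ur\{x\}$'' in the SIA statement is an evident typo --- it directly contradicts ``$y\notin\ur(\{x\}\cup U)$'' on the same line --- and should be $y\in\dr\{x\}$.) Once SIA gives $y\le c$ rather than $y\ge c$, your plan collapses: $y$ is then comparable to $c\in C$, so you cannot contradict maximality by producing an element incomparable to all of $C$.

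\textbf{What the paper actually does.} The paper applies SIA not to a chosen $c\in C$ but to $x$ itself, with $U=C\smin A$ and $M=C\cup\{x\}$. The SIA hypothesis $(\{x\}\cup\ur U)\cap\wld(L)=\ems$ is immediate: $U=C\smin\dr\wld(L)$ forces $\ur U\cap\wld(L)=\ems$, and in Case~2 one has $x\in\ur U$. SIA then gives $y<x$ with $y\notin\ur U$ and $y\le m\iff x\le m$ for all $m\in C\cup\{x\}$. Since $x$ lies strictly above $C$, $x\le m$ fails for every $m\in C$, hence so does $y\le m$: thus $y\notin\dr C$. Maximality forces $y\in\ur C$, and $y\notin\ur U$ then gives $y\in\ur A$. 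Finally $y\le x$ yields $x\in\ur A$. No contradiction is sought; the element $y$ is used constructively to drag $x$ into $\ur A$.
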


\begin{proof}
The inclusion $\supe$ is trivial. 
Thus, let $x\in\ur C\smin C$ and set $U=C\smin A$. 

If $x\notin\ur U$ then $x\in\ur A$ and hence $x\in\ur A\smin C$. 

If $x\in\ur U$ then $x\notin \wld(L)$. We have  $\ur U\cap\wld(L)=\ems$ and hence we can apply SIA to obtain a $y\notin\wld(L)$ such that
\begin{equation}
y\notin \ur(\set{x}\cup U)\qtq{and}\forall m\in C\cup \set{x},\ y\leq m\ \text{iff}\ x\leq m.  \tag{$*$}
\end{equation}
Now $x\in\ur C\smin C$ and hence if $m\in C\cup\set{x}$ then $x\leq m$ only if $x=m$ and consequently $y\notin\dr C$. Since $C$ is a maximal antichain, $y\in\ur C\smin C$. By $(*)$, $y\notin U=C\smin A$, hence $y\in\ur A$ and since $y\leq x$ we have, by $(*)$ again, $x\in\ur A$.
\end{proof}

\begin{prop}\label{5.3.2}
In a Heyting algebra $L$ with SIA and finite
connected decompositions, let $C$ be a finite maximal antichain such that
$A=C\cap\dr\wld(L)=C\cap\wld(L)$. Consider the unique finite duality
$(A,B)$. Then
\[
C=A\cup(B\smin\dr A).
\]
\end{prop}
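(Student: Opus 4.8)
The plan is to exhibit $D:=A\cup(B\smin\dr A)$ as a second finite maximal antichain having the same down-set as $C$, and then to invoke the elementary fact that a maximal antichain is determined by its down-set. That fact is a two-line chase: if $\dr C=\dr D$ and $c\in C$, then $c\le d$ for some $d\in D$ and $d\le c'$ for some $c'\in C$, so $c=c'=d\in D$ by the antichain property, and symmetrically $D\subseteq C$. That $D$ itself is a finite maximal antichain is precisely Observation~\ref{5.2}, applied to the duality $(A,B)$ supplied by the hypothesis (which exists and is unique because $A=C\cap\wld(L)\sue\wld(L)$). Hence everything reduces to comparing $\dr C$ and $\dr D$, which I will do by comparing their complements.

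For $D$: since $\dr$ distributes over finite unions, $\dr D=\dr A\cup\dr(B\smin\dr A)$; and this equals $\dr A\cup\dr B$, because an element below some $b\in B$ lies in $\dr A$ when $b\in\dr A$ and otherwise has $b\in B\smin\dr A$. Using the duality relation $\dr B=L\smin\ur A$ we obtain $\dr D=\dr A\cup(L\smin\ur A)$, equivalently
\[
L\smin\dr D=\ur A\smin\dr A.
\]

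For $C$: since $C$ is a maximal antichain, $\dr C\cup\ur C=L$ and $\dr C\cap\ur C=C$, hence $L\smin\dr C=\ur C\smin C$. This is the point at which the previous lemma enters: by Lemma~\ref{5.3.1} (which is where SIA is used), $\ur C\smin C=\ur A\smin C$. It then remains to check $\ur A\smin C=\ur A\smin\dr A$, and this is the one small observation carrying weight: since $A$ is an antichain contained in the antichain $C$, the set $\ur A$ meets both $\dr A$ and $C$ in exactly $A$. Indeed an element of $\ur A\cap\dr A$ is squeezed between two members of the antichain $A$, hence lies in $A$; and an element of $\ur A\cap C$ is above a member of $C\supseteq A$, hence equals it and again lies in $A$. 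Removing $A$ from $\ur A$ thus produces the same set whether one subtracts $\dr A$ or $C$.

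Assembling the chain $L\smin\dr C=\ur C\smin C=\ur A\smin C=\ur A\smin\dr A=L\smin\dr D$ gives $\dr C=\dr D$, and therefore $C=D$. I do not anticipate a real obstacle here: the substance has been spent in Lemma~\ref{5.3.1} and, through it, in SIA, while the remaining manipulations are bookkeeping with up-sets and down-sets of antichains. The only step that wants a moment's attention is the identity $\ur A\smin C=\ur A\smin\dr A$, which leans on $A$ being a sub-antichain of the maximal antichain $C$.
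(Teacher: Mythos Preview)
Your argument is correct. Both you and the paper rely on the same two ingredients, Lemma~\ref{5.3.1} and Observation~\ref{5.2}, but the endgames differ. The paper argues by inclusion: for each $b\in B\smin\dr A$ it first uses Lemma~\ref{5.3.1} to get $b\in\dr C$, then shows that $b<c$ for some $c\in C$ would force $c\in C\smin A$ and hence (by the duality) $a\le c$ for some $a\in A$, contradicting the antichain property; having obtained $D\subseteq C$, maximality of~$D$ (Observation~\ref{5.2}) finishes. You instead compute the complements of the down-sets globally: $L\smin\dr D=\ur A\smin\dr A$ from the duality, and $L\smin\dr C=\ur C\smin C=\ur A\smin C=\ur A\smin\dr A$ via Lemma~\ref{5.3.1} and the observation $\ur A\cap C=A=\ur A\cap\dr A$. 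Your route avoids the small case analysis in the paper's element chase and makes transparent that nothing beyond Lemma~\ref{5.3.1} and the antichain bookkeeping is needed; the paper's route is a line or two shorter because it only proves one inclusion and lets maximality do the rest. Either way the real content sits in Lemma~\ref{5.3.1}.
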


\begin{proof}
If $b\in B$ then $b\notin\ur A$ and hence, by~\ref{5.3.1}, $b\notin \ur
C\smin C$. Consequently, since $C$ is a maximal antichain, $b\in \dr C$.

Now suppose that, moreover, $b\notin\dr A$. We want to prove that $b\in
C$. If not, $b<c$ for some $c\in C$ and this means, by our assumption,
that $c\in C\smin A$. Then $c\nleq b'$ for all $b'\in B$ and hence,
by duality, $a\leq c$ for some $a\in A $ contradicting the antichain
property. Thus, $b\in C$ and we have $A\cup(B\smin\dr A)\sue C$,
and since by~\ref{5.2} $A\cup(B\smin\dr A)$ is a maximal antichain,
$A\cup(B\smin\dr A)= C$.
\end{proof}

\subsection{Splitting antichains}

Our final Proposition~\ref{5.3.2} asserts
that every finite maximal antichain~$C$ in a Heyting algebra~$L$ with SIA
either contains an element of $\dr\wld(L)\setminus\wld(L)$, or has a very
special structure. In particular, if $B\cap\dr A=\emptyset$, it can
be \emph{split} into two subsets ($A$ and $B$) so that every element
of~$L\setminus C$ is above~$A$ or below~$B$. This fact has a direct
connection to the splitting property of posets
(see~\cite{AhlErdGra:A-splitting,AhlKha:Splitting,Dza:A-Note,Erd:Splitting,ErdSou:How-to-split,ErdSou:No-FiniteInnite,FonNes:Splitting-finite}).
Indeed, in~\cite{FNT:GenDu} it is proved that finite maximal
antichains in the poset arising from the category~$\C$ of
relational structures with one relation contain no elements of
$\dr\wld(\widetilde\C)\setminus\wld(\widetilde\C)$. Almost all such
antichains split, with finitely many characterized exceptions.  However,
the proof relies heavily on special properties of the category in question
(digraphs) and we do not expect that it could be easily translated into
the general setting of Heyting algebras.

\subsection{Remarks}

The Sparse Incomparability Lemma has a long
and interesting history. While it seems to have been formulated
specifically in this form first in~\cite{NesRod:LGH} for $G=K_n$ and
then in~\cite{NesZhu:On-sparse} for general $G$, it was preceded in
the seminal work on sparse graphs with high chromatic number  by Erd\H
os and others (\cite{BolSau:Uniquely-colourable, Erd:Gtp, ErdHaj:LGH,
GreLov:Applications, Kri:A-hypergraph-free, Lov:LGH, MatNes:Construction,
Mul:On-colorable, Mul:On-colorings, Zhu:Uniquely-H-colorable}). This
useful lemma is related to an important result in descriptive complexity
by Kun (\cite{Kun:Constraints}), and to a recent result on limits
in graph sequences (\cite{NesOss:ResDual,NesOss:From}).


\end{document}